\xpatchcmd{\proof}{\itshape}{\bfseries}{}{}
\setlist[enumerate,1]{label={(\arabic*)}}
\patchcmd{\section}{\scshape}{\bfseries}{}{}
\renewcommand{\@secnumfont}{\bfseries}
\numberwithin{equation}{section}
\numberwithin{figure}{section}
\newcommand{\labeltext}[3][]{%
	\@bsphack%
	\csname phantomsection\endcsname
	\def\tst{#1}%
	\def\labelmarkup{}
	\def\refmarkup{}%
	\ifx\tst\empty\def\@currentlabel{\refmarkup{#2}}{\label{#3}}%
	\else\def\@currentlabel{\refmarkup{#1}}{\label{#3}}\fi%
	\@esphack%
	\labelmarkup{#2}
}
\declaretheorem[numberwithin=section,numberlike=equation,style=plain]{theorem}
\declaretheorem[numbered=no,style=plain,name=Theorem]{theorem*}
\declaretheorem[numberwithin=section,numberlike=equation,style=plain]{proposition}
\declaretheorem[numbered=no,style=plain,name=Proposition]{proposition*}
\declaretheorem[numbered=no,style=plain,name=Lemma]{lemma*}
\declaretheorem[numberwithin=section,numberlike=equation,style=plain]{corollary}
\declaretheorem[numbered=no,style=plain,name=Corollary]{corollary*}
\declaretheorem[numbered=no,style=plain,name=Conjecture]{conjecture*}
\declaretheorem[numberwithin=section,numberlike=equation,style=plain]{question}
\declaretheorem[numbered=no,style=plain,name=Question]{question*}
\declaretheorem[numberwithin=section,numberlike=equation,style=definition]{definition}
\declaretheorem[numbered=no,style=definition,name=Definition]{definition*}
\declaretheorem[numberwithin=section,numberlike=equation,style=definition]{remark}
\declaretheorem[numbered=no,style=definition,name=Remark]{remark*}
\declaretheorem[numbered=no,style=definition,name=Notation]{notation*}
\declaretheorem[numbered=no,style=definition,name=Axiom]{axiom*}
\declaretheorem[numbered=no,style=definition,name=Construction]{construction*}
\declaretheorem[numbered=no,style=definition,name=Algorithm]{algorithm*}
\declaretheorem[numbered=no,style=definition,name=Property]{property*}
\declaretheorem[numberwithin=section,numberlike=equation,style=definition,qed=$\diamondsuit$]{example}
\declaretheorem[numbered=no,style=definition,name=Example,qed=$\diamondsuit$]{example*}
\newcommand{\define}[1]{\textcolor{magenta}{\emph{#1}}}
\title{Horospherical varieties with quotient singularities}
\author{Sean Monahan}
\address{Sean Monahan, Department of Mathematics, Technische Universit\"at M\"unchen, Boltzmannstra{\ss}e 3, 85748 Garching bei M\"unchen, Germany}
\email{sean.monahan@cit.tum.de}
\subjclass[2020]{14M27, 14L30, 14B05, 05E14}
\keywords{Horospherical, quotient singularity, local-to-global, Cox construction}
\begin{document}
	
\begin{abstract}
	Our main result is a combinatorial characterization of when a horospherical variety has (at worst) quotient singularities. Using this characterization, we show that every quasiprojective horospherical variety with quotient singularities is \textit{globally} the quotient of a smooth variety by a finite abelian group.
\end{abstract}	
	
\maketitle

\section{Introduction}\label{sec:introduction}

Quotient singularities are ubiquitous in all areas of geometry because they form a reasonable class of singularities to first study when expanding into the non-smooth realm. Recall that a variety $X$ over an algebraically closed field $k$ of characteristic $0$ is said to have (at worst) \define{quotient singularities} if it is \'{e}tale locally the quotient of a smooth variety by a finite group.\footnote{We always implicitly mean ``at worst", i.e. smooth varieties have quotient singularities in our terminology.} It is an easy fact that any variety which is globally of the form $V/F$ where $V$ is a smooth variety and $F$ is a finite group has quotient singularities. This paper is motivated by a question, originally posed by William Fulton and stated in \cite[Question 1.1]{satriano2015quotients}, which asks about the converse of this fact.

\begin{question}[Fulton's question]\label{fulton's question}
	If $X$ is a variety with quotient singularities, does there exist a smooth variety $V$ and a finite group $F$ acting on $V$ such that $X=V/F$?
\end{question}

\cref{fulton's question} was originally studied in \cite{satriano2015quotients},\footnote{In \cite{satriano2015quotients}, the authors allow for $k$ to be non-algebraically closed or to have positive characteristic. Note that, in positive characteristic, one should study \textit{tame} quotient singularities and be more careful regarding the characteristic of $k$.} where the authors show that there is an affirmative answer for quasiprojective toric varieties. In fact, they show that there is an affirmative answer for any quasiprojective variety which is globally the quotient of a smooth variety by a diagonalizable group. This applies to toric varieties via the Cox construction \cite{cox1995homogeneous}, which is a way of writing a toric variety with quotient singularities as the quotient of a smooth toric variety by a subgroup of its torus. 

Following the foundational work of Hu and Keel on Mori dream spaces \cite{hu2000mori}, the Cox construction has been extended to a broad range of varieties. In particular, this construction is well understood for the class of horospherical varieties, going back to the work of Brion \cite{brion2007total-coordinate}; we give a review of this construction in \cref{subsec:cox construction}. Recall that horospherical varieties are normal varieties equipped with an action from a reductive algebraic group $G$ for which there is an open orbit whose points are stabilized by maximal unipotent subgroups (see \cref{subsec:characterization of quotient singularities} below for more); these objects generalize toric varieties from the case where $G=T$ is a torus. 

Much is known about the singularities of horospherical varieties (e.g. factorial, canonical, and terminal) \cite{pasquier2017survey-singularities}, but not about quotient singularities specifically. In particular, it was previously unknown if the variety arising from the Cox construction is smooth when $X$ has quotient singularities. 
This is the motivation for our main result (\cref{characterization of quotient singularities}), which is a combinatorial characterization of when a horospherical variety has quotient singularities. Equipped with this characterization, we are able to deduce that the variety arising in the Cox construction is indeed smooth when $X$ is horospherical with quotient singularities. Therefore, we may apply the result of \cite{satriano2015quotients} to answer \cref{fulton's question} in the affirmative for quasiprojective horospherical varieties. 

\begin{theorem}[Fulton's question for horospherical varieties]\label{fulton's question for horospherical varieties}
	\cref{fulton's question} has an affirmative answer for any quasiprojective horospherical variety $X$ with quotient singularities. In fact, $F$ may be taken to be a finite diagonalizable (hence abelian) group. 
\end{theorem}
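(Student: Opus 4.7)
My plan follows the strategy outlined in the introduction. Let $X$ be a quasiprojective horospherical variety with quotient singularities. By the Cox construction reviewed in \cref{subsec:cox construction}, going back to Brion \cite{brion2007total-coordinate}, there is a factorial horospherical variety $\hat{X}$ equipped with an action of a diagonalizable group $H$ such that $X$ is globally the quotient $\hat{X}/H$. The main theorem of \cite{satriano2015quotients} provides an affirmative answer to \cref{fulton's question} for any quasiprojective variety which is globally the quotient of a smooth variety by a diagonalizable group, and moreover guarantees that the resulting $F$ can be taken to be finite diagonalizable. Thus the entire statement reduces to the single assertion that the Cox-construction variety $\hat{X}$ is smooth.

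To establish smoothness of $\hat{X}$, I would use \cref{characterization of quotient singularities} as a bridge between the combinatorics of $X$ and that of $\hat{X}$. The Cox construction is itself horospherical, and the colored fan of $\hat{X}$ is built from that of $X$ in an explicit way: roughly, each ray of the colored fan of $X$ together with each color is promoted to an independent generator in an enlarged lattice. In the purely toric case, this move takes a simplicial fan to a smooth one, and one should expect \cref{characterization of quotient singularities} to provide the analogous "simplicial-type" combinatorial condition on the colored cones of $X$. The task is then to translate that condition through the Cox construction into the statement that every maximal colored cone of $\hat{X}$ is generated by part of a $\mathbb{Z}$-basis of the ambient cocharacter lattice, which is the local smoothness criterion for horospherical varieties.

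The main obstacle is this final combinatorial translation. Unlike the toric setting, colors contribute new rays in the Cox construction and interact nontrivially with the lattice structure, so one must verify not merely simpliciality of the lifted cones but that their generating vectors span a saturated sublattice. I would expect to handle this locally, by reducing to the affine horospherical case, using the $G$-equivariant local model around a generic orbit, and tracking how each generator of a colored cone of $X$ lifts under the Cox construction; the "simplicial-type" condition from \cref{characterization of quotient singularities} should then feed directly into the desired basis condition upstairs. Granted smoothness of $\hat{X}$, a direct application of \cite{satriano2015quotients} to the quotient $X = \hat{X}/H$ completes the proof and yields the refined statement that $F$ is finite diagonalizable.
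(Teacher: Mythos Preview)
Your overall strategy matches the paper's exactly: use \cref{characterization of quotient singularities} to extract combinatorial information about $X$, transfer it through the Cox construction to conclude that $\wh X$ is smooth, and then invoke \cite{satriano2015quotients}. However, your execution of the middle step contains a genuine misconception that would send you down the wrong path.

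You identify ``every maximal colored cone of $\wh X$ is generated by part of a $\Z$-basis of the ambient cocharacter lattice'' as the local smoothness criterion for horospherical varieties. That condition is \emph{regularity}, which for horospherical varieties is equivalent only to factoriality, not smoothness (\cref{example Q-factorial not quotient singularity} is precisely an example where this fails). The correct smoothness criterion is regular \emph{plus} vivid. Crucially, the Cox coloured fan $\wh\Sigma^c$ is \emph{always} regular, by construction (Step~6 in \cref{subsec:cox construction}), regardless of any hypothesis on $X$. So the lattice-saturation issue you anticipate as the ``main obstacle'' is in fact automatic and uses nothing from \cref{characterization of quotient singularities}.

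What actually needs to be transferred is vividness, and this is immediate: each $\wh\sigma^c$ carries the same colour set $\wh\calF=\calF$ as the corresponding $\sigma^c$, and vividness is a condition purely on these colour sets inside the Dynkin diagram of $G$ (which is unchanged in passing to $\wh G$). This is recorded in \cref{vivid in cox construction}. So the paper's argument runs: $X$ has quotient singularities $\Rightarrow$ $\Sigma^c$ is vivid (by \cref{characterization of quotient singularities}) $\Rightarrow$ $\wh\Sigma^c$ is vivid $\Rightarrow$ $\wh\Sigma^c$ is regular and vivid $\Rightarrow$ $\wh X$ is smooth. Once you correct the smoothness criterion, the ``combinatorial translation'' you anticipate collapses to a one-line observation, and the rest of your outline goes through (with the minor addition that one should note $\wh K$ acts with finite stabilizers, which follows from $\Q$-factoriality of $X$).
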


\begin{remark}\label{fulton's global quotient need not be horospherical}
	In \cref{fulton's question for horospherical varieties}, we have that $X=V/F$, but we are \textit{not} saying that $V$ is a horospherical variety or that the quotient map $V\to X$ is a horospherical morphism. For example, \cite[Proposition 4.1]{satriano2015quotients} shows that the weighted projective space $\P(1,1,2)$ blown up at a torus-invariant point, which is a toric variety (hence horospherical), cannot be written as the quotient of a smooth \textit{toric} variety by a finite subgroup of its torus. 
\end{remark}

\subsection{Characterization of quotient singularities}\label{subsec:characterization of quotient singularities}

To state our combinatorial characterization of quotient singularities, we first establish some notation (following \cite{monahan2023overview}). For an introduction to horospherical (and more generally, spherical) varieties, we recommend \cite{knop1991luna,perrin2018sanya,monahan2023overview} or \cite[Chapter 5]{timashev2011homogeneous} for a more extensive resource.

Let $G$ be a connected reductive (linear) algebraic group over $k$ (e.g. $\SL_n$ or a torus $\G_m^n$), and let $S$ be the set of simple roots for $G$ with respect to a fixed maximal torus $T\subseteq G$ (e.g. diagonal matrices) and Borel subgroup $B\subseteq G$ (e.g. upper triangular matrices). Relative to the opposite Borel $B^-$ (e.g. lower triangular matrices), let $H\subseteq G$ be a horospherical subgroup, i.e. $H$ is closed and contains the unipotent radical of $B^-$. Let $P:=N_G(H)$, which is a parabolic subgroup of $G$ containing $B^-$; it is known that $P$ corresponds uniquely to a subset $I\subseteq S$ of simple roots for a Levi subgroup of $P$. 

A \define{$G/H$-horospherical variety} is a normal $G$-variety $X$ where there is an open orbit $G\cdot x_0\subseteq X$ such that the stabilizer of the base point $x_0$ is $H$; we can view this open orbit as the homogeneous space $G/H$. When $G=T$ is a torus we recover toric varieties, and when $H=P$ is a parabolic subgroup we recover the flag variety $G/P$. 

There is a combinatorial theory for horospherical varieties which goes back to the work of Vinberg and Popov \cite{vinberg1972class-quasihomogeneous} on affine horospherical varieties, and was extended by Luna and Vust \cite{luna1983plongements} to embeddings of homogeneous spaces including spherical varieties (this is summarized in \cite{knop1991luna}). This theory tells us that $X$ corresponds to a so-called coloured fan $\Sigma^c$ on a lattice $N$. In the case where $G=T$ is a torus, this is precisely the correspondence between toric varieties and fans (there are no \textit{colours} in this case). 

The aforementioned lattice $N$ is the one-parameter subgroup lattice of the torus $P/H$; notice that $N=N(G/H)$ only depends on the homogeneous space $G/H$. This lattice is equipped with a map $\xi:\calC:=S\setminus I\to N$ from a \define{universal colour set} $\calC$ which associates a \define{colour} $\alpha\in\calC$ to a \define{colour point} $\xi(\alpha)=:u_\alpha\in N$; see \cite[6]{knop1991luna} (or \cite[Section 4]{monahan2023overview}) for details. 

A \define{coloured cone} $\sigma^c$ on this ``coloured lattice" $N$ is a pair $(\sigma,\calF)$ where $\sigma\subseteq N_\R$ is a strongly convex integral cone and $\calF\subseteq\calC$ satisfies $\xi(\calF)\subseteq\sigma\setminus\{0\}$. A \define{coloured fan} $\Sigma^c$ on $N$ is a finite collection of coloured cones on $N$ which satisfies two properties: (1) for every $\sigma^c=(\sigma,\calF)\in\Sigma^c$ and every face $\tau\subseteq\sigma$, the \define{coloured face} $\tau^c=(\tau,\calF\cap\xi^{-1}(\tau))$ is in $\Sigma^c$; and (2) for every pair $\sigma_i^c=(\sigma_i,\calF_i)\in\Sigma^c$ (for $i=1,2$), their intersection $\sigma_1^c\cap\sigma_2^c=(\sigma_1\cap\sigma_2,\calF_1\cap\calF_2)$ is a coloured face of each $\sigma_i^c$ (and is therefore in $\Sigma^c$). Let $\Sigma^c(1)$ (resp. $\sigma^c(1)$) denote the set of coloured rays in $\Sigma^c$ (resp. which are contained in $\sigma^c$), i.e. coloured cones $(\rho,\calF)$ where $\rho$ is one-dimensional, and let $u_\rho\in N$ denote the minimal generator of $\rho$; we call $(\rho,\calF)$ a \define{non-coloured ray} if $\calF=\varnothing$.  

The following properties of $\Sigma^c$, which extend the notion of a simplicial (resp. smooth) fan in toric geometry, have been well studied in its relation to the geometry of $X$.

\begin{definition}[Simplicial/regular]\label{simplicial/regular definition}
	Let $\Sigma^c$ be a coloured fan on $N$. For any $\sigma^c=(\sigma,\calF)\in\Sigma^c$, consider the multiset
	\begin{align}\label{eq:simplicial/regular multiset}
		\{u_\rho:(\rho,\varnothing)\in\sigma^c(1)\}\cup\xi(\calF)
	\end{align}
	consisting of minimal generators for the non-coloured rays in $\sigma^c$ and colour points coming from $\calF$ (note: we say \textit{multiset} so that we count colour points with multiplicity). We say that $\Sigma^c$ is \define{simplicial} (resp. \define{regular}) if, for each $\sigma^c\in\Sigma^c$, the multiset \cref{eq:simplicial/regular multiset} is $\R$-linearly independent (resp. is part of a $\Z$-basis for $N$).
\end{definition}

It is known that $X$ is $\Q$-factorial (resp. factorial) if and only if $\Sigma^c$ is simplicial (resp. regular); for instance, see \cite[Theorem 4.2.3]{perrin2018sanya}. This plays an important role in our characterization of quotient singularities, because varieties with quotient singularities are always $\Q$-factorial, so $\Sigma^c$ being simplicial is a necessary condition for $X$ to have quotient singularities. 

To extend this necessary condition to a characterization, we present the following heuristic. In general, being smooth is a stronger condition than being factorial, and having quotient singularities is stronger than being $\Q$-factorial. One can think of there as being a gap between smoothness and factoriality, and a gap between quotient singularities and $\Q$-factoriality. The hope is that these gaps are the same, i.e. there is some ``extra condition" such that being smooth is equivalent to being factorial plus having this ``extra condition", \textit{and} having quotient singularities is equivalent to being $\Q$-factorial plus having \textit{this same} ``extra condition". 

It is well-known that these gaps, in both cases, are non-existent for toric varieties. However, these gaps are certainly present for horospherical varieties in general (e.g. see \cref{example Q-factorial not quotient singularity}). There is a combinatorial characterization for smooth horospherical varieties, originally due to \cite[\nopp 3.5]{pauer1983glatte-einbettungen} in the special case when $H$ is a maximal unipotent subgroup, and extended to the general case in \cite[Theorem 2.6]{pasquier2006thesis} and \cite[Theorem 28.10]{timashev2011homogeneous} independently, which uses the following ``extra condition" -- regarding the colours in $\Sigma^c$ -- to fill in the gap between smoothness and factoriality.

\begin{restatable}[Vivid]{definition}{colourconditiondefinition}\label{vivid definition}
	Let $X$ be a $G/H$-horospherical variety with coloured fan $\Sigma^c$. Recall that $P=N_G(H)$ corresponds to the subset $I$ of simple roots of $G$. For $J\subseteq S$, let $\Gamma_J$ be the sub-Dynkin diagram of $G$ whose vertices are the simple roots in $J$. We say that $\Sigma^c$ (or $X$ itself) is \define{vivid} if, for each $\sigma^c=(\sigma,\calF)\in\Sigma^c$ and each $\alpha\in\calF$, the following conditions hold:
	\begin{enumerate}
		\item The connected component $\Gamma^\alpha$ of $\Gamma_{I\cup\calF}$ containing $\alpha$ contains no other element of $\calF$. 
		\item $\Gamma^\alpha=\Gamma_{\{\alpha\}\cup I_\alpha}$ for some $I_\alpha\subseteq I$, and $\Gamma^\alpha$ has type $\mathbf{A}_n$ or $\mathbf{C}_n$ (for some $n$) in which $\alpha$ is the first simple root (for $\mathbf{C}_n$ this means the opposite end to the longest simple root). 
	\end{enumerate} 
\end{restatable}

To be clear, \cite[Theorem 2.6]{pasquier2006thesis} and \cite[Theorem 28.10]{timashev2011homogeneous} say that $X$ is smooth if and only if $\Sigma^c$ is regular and vivid.\footnote{Instead of \textit{vivid}, the term \textit{lisse} (translating to \textit{smooth}) is used in \cite[Definition 2.4]{pasquier2006thesis}. We choose to avoid \textit{lisse} in this context because this alone does not correspond to smoothness of horospherical varieties, and we want to avoid confusion with the notion of a \textit{smooth fan} in polyhedral geometry.} Geometrically, one should think of vividness as saying that $X$ has a local model in terms of toric varieties; we elaborate on vividness and its geometric meaning in \cref{subsec:vividness}. 

The following theorem shows that this same ``extra condition" (vividness) fits perfectly into the gap between having quotient singularities and being $\Q$-factorial. 

\begin{restatable}[Characterization of quotient singularities]{theorem}{characterizationofquotientsingularities}\label{characterization of quotient singularities}
	Let $X$ be a $G/H$-horospherical variety with coloured fan $\Sigma^c$. Then $X$ has quotient singularities if and only if $\Sigma^c$ is simplicial and vivid. 
\end{restatable}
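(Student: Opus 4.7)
The plan is to prove \cref{characterization of quotient singularities} by reducing to a local statement on each simple affine chart and then stripping away the unipotent radical via the local structure theorem. Since having quotient singularities is an \'etale-local property and every point of $X$ lies in some chart $X_{\sigma^c}$ attached to a coloured cone $\sigma^c=(\sigma,\calF)\in\Sigma^c$, it suffices to prove the equivalence for each $\sigma^c$ individually. The ``simplicial'' half is essentially formal: quotient singularities force $\Q$-factoriality, which is already known to be equivalent to $\Sigma^c$ being simplicial. So the real content is that, once $\Sigma^c$ is simplicial, the extra requirement of vividness is exactly what closes the gap between $\Q$-factoriality and quotient singularities, mirroring how Pasquier and Timashev use the same condition to close the gap between factoriality and smoothness.

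Next I would invoke the local structure theorem for spherical varieties. The ``coloured'' open $X_{\sigma^c}^0 \subseteq X_{\sigma^c}$, obtained by removing all colour divisors not in $\calF$, is a $P$-stable affine open that factors as $P_u \times Z$, where $P_u$ is the unipotent radical of $P$ and $Z$ is an $L$-stable affine slice for some Levi $L\subseteq P$. Since $P_u$ is an affine space, this product decomposition is smooth, and hence $X_{\sigma^c}$ has quotient singularities if and only if $Z$ does. The slice $Z$ is controlled combinatorially by $(\sigma,\calF)$ together with how each $\alpha \in \calF$ meets $I$ in the Dynkin diagram of $G$. When $\calF = \varnothing$ one recovers the affine toric variety associated to a torus quotient of $L$, and the statement reduces to the familiar toric fact that simplicial cones give quotient singularities. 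The substance is the interaction between $\calF$ and the Levi action.

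For sufficiency (simplicial and vivid imply quotient singularities), each colour $\alpha \in \calF$ contributes a simple $L$-submodule inside $Z$; vividness conditions (1) and (2) of \cref{vivid definition} are designed precisely so that this submodule is, up to a finite central subgroup, an affine space on which a diagonal torus acts. Combining these colour directions with the toric part coming from the non-coloured rays of $\sigma^c$, one can explicitly present $Z$ as a finite abelian quotient of a smooth variety of the form (affine space) $\times$ (simplicial affine toric variety); this is the same geometric picture underlying the horospherical Cox construction recalled in \cref{subsec:cox construction}, and it gives the desired quotient singularity presentation. For necessity, if condition (1) or (2) fails at some $(\sigma,\calF)$, I would construct an explicit obstruction on the slice: $Z$ then contains a factor arising from a simple $L$-module whose affine cone has a singularity that is \emph{not} a quotient singularity, detected, say, by computing the local class group or the local fundamental group of the smooth locus. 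The hard part is this last step: the vividness conditions are unusually specific (requiring $I_\alpha\cup\{\alpha\}$ to be of type $\mathbf{A}_n$ or $\mathbf{C}_n$ with $\alpha$ at one end), and matching each forbidden Dynkin configuration to a concrete non-quotient singularity on $Z$ --- while simultaneously confirming that every \emph{allowed} configuration really does yield a finite quotient of a smooth variety --- is where the principal case analysis will take place.
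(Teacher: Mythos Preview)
Your reduction to the local slice $Z$ via the local-structure theorem is exactly the right opening move, and it matches what the paper does (though note the relevant parabolic is the $Q$ of \cref{subsec:affine local structure}, not $P$ itself). The two directions then diverge from the paper in instructive ways.

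For sufficiency, your plan to build $Z$ explicitly as a finite abelian quotient of a smooth variety would work, but the paper short-circuits this entirely: once you are on the affine slice with $\calF(\sigma^c)=\calC$, vividness forces $G/P$ to be a product of projective spaces, and then \cref{vivid for affine} says $Z$ is literally an affine toric variety. Simplicial $+$ toric $\Rightarrow$ quotient singularities is then a one-line citation. So there is no need to assemble the colour directions by hand.

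For necessity, your proposal leaves the real work unspecified, and the mechanism you anticipate is not the one the paper uses. You expect a Dynkin-type case analysis, matching each forbidden configuration $I_\alpha\cup\{\alpha\}$ to an explicit non-quotient cone singularity. The paper avoids all of this with two reductions and a single uniform obstruction. First, it passes to $\wh X=\Spec(\Cox(X))=Y\times\A^\ell$ with $Y=\Spec(\Cox(G/P))$, so it suffices to rule out quotient singularities on $Y$. Second, it strips colours: a partial decolouration $Y_0\to Y$ with $\#\calF(Y_0)=1$ can only improve singularities, so one may assume $\calC=\{\alpha\}$ and $Y$ is the affine cone over a flag variety with vertex $p$ the unique singular point. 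The punchline is a fundamental-group argument: the punctured cone $Y\setminus\{p\}$ is $G/[P,P]$, which is simply connected because $G$ is (we may take $G=G^{ss}$) and $[P,P]$ is connected. An isolated quotient singularity would give a finite \'etale cover of $V\setminus\{p\}$ by a punctured affine space, forcing the covering group to be trivial --- contradicting the fact that non-vividness makes $p$ genuinely singular. You did list ``local fundamental group of the smooth locus'' among your candidate obstructions, so you were close; the missing idea is the reduction to a single colour, which turns the smooth locus into a homogeneous space whose $\pi_1$ you can read off directly, and which makes the anticipated case analysis unnecessary.
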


\begin{remark}\label{toroidal quotient singularities}
	In \cref{characterization of quotient singularities}, if $X$ is toroidal, meaning $\calF=\varnothing$ for each $\sigma^c=(\sigma,\calF)\in\Sigma^c$ (e.g. if $G=T$ is a torus and $X$ is a toric variety), then vividness holds vacuously, so $X$ has quotient singularities if and only if $\Sigma^c$ is simplicial (i.e. $X$ is $\Q$-factorial).
\end{remark}

\subsection{Open questions}

Knowing that horospherical varieties are part of the larger class of spherical varieties, which enjoy the combinatorial characterization via coloured fans (although the definition of \textit{coloured fan} is different from the one we are using), there are two natural questions that arise regarding extensions of our work. The first is on extending our main theorem to spherical varieties.

\begin{question}\label{open question-quotient singularities for spherical varieties}
	Is there a combinatorial characterization (on the coloured fan) for when a spherical variety has quotient singularities? 
\end{question}

Based on our heuristic, one might hope that the gap between smoothness and factoriality is the same as the gap between quotient singularities and $\Q$-factoriality for spherical varieties. For example, there is a known combinatorial characterization of smoothness for spherical varieties due to Gagliardi \cite[Theorem 1.2]{gagliardi2015smoothness-spherical},
so we might hope that a spherical variety $X$ has quotient singularities if and only if its coloured fan is simplicial (so that $X$ is $\Q$-factorial) and conditions (2) and (3) of \cite[Theorem 1.2]{gagliardi2015smoothness-spherical} are satisfied. 

The second natural question is if Fulton's question holds for spherical varieties.

\begin{question}\label{open question-fulton's question for spherical varieties}
	What is the answer to \cref{fulton's question} for spherical varieties?
\end{question}

If one successfully addresses \cref{open question-quotient singularities for spherical varieties}, then they may be able to use our strategy for proving \cref{fulton's question for horospherical varieties} to give an answer to \cref{open question-fulton's question for spherical varieties}. Note that the Cox construction holds for spherical varieties, although our combinatorial description in \cref{subsec:cox construction} only applies to the horospherical setting.

\subsection{Fixed notation}

Throughout this paper we use the following notation. All varieties are assumed to be irreducible, and all varieties and algebraic groups are assumed to be defined over an algebraically closed field $k$ of characteristic $0$.

Regarding horospherical varieties and coloured fans, we use the notation and conventions from \cite{monahan2023overview}. Throughout the paper, we fix a connected reductive (linear) algebraic group $G$; we may assume that $G=G^{ss}\times T_0$ where $G^{ss}$ is semisimple simply connected and $T_0$ is a torus. Fix a Borel subgroup $B\subseteq G$ and a maximal torus $T\subseteq B$, and relative to these subgroups, let $S$ be the set of simple roots of $G$. Let $B^-$ be the opposite Borel subgroup, which is conjugate to $B$ in $G$ and satisfies $B\cap B^-=T$. Let $U\subseteq B^-$ denote the unipotent radical of $B^-$ (which is a maximal unipotent subgroup of $G$), and fix a horospherical subgroup $H\subseteq G$ containing $U$. Let $P:=N_G(H)\supseteq B^-$ denote the associated parabolic subgroup, which corresponds to $I\subseteq S$. 

Let $N=N(G/H)$ denote the coloured lattice associated to $G/H$ (equal to the one-parameter subgroup lattice of the torus $P/H$), which has universal colour set $\calC=\calC(G/H)$ and colour map $\xi:\calC\to N$ sending $\alpha\mapsto \xi(\alpha)=u_\alpha$. For a coloured cone $\sigma^c=(\sigma,\calF)$, we sometimes write $\calF(\sigma^c)$ to denote its colour set $\calF$. For a coloured fan $\Sigma^c$ on $N$, let $\Sigma^c(1)$ denote the set of coloured rays of $\Sigma^c$. Given a coloured ray $\rho^c=(\rho,\calF)\in\Sigma^c(1)$, let $u_\rho\in N$ denote the minimal generator for $\rho$. 

For a $G/H$-horospherical variety $X$, let $\calD(X)$ denote the set of $B$-invariant prime divisors in $X$.\footnote{
	The orbit $B\cdot x_0$ (which is contained in $G/H=G\cdot x_0$) is open in $X$ because we chose $H$ to contain the unipotent radical of the \textit{opposite} Borel $B^-$. Then the elements of $\calD(X)$ are the boundary components of this open $B$-orbit. Note that \cite{monahan2023overview} uses the opposite convention: the roles of $B$ and $B^-$ are swapped. 
}
Note that $\calD(X)$ is in natural bijection with the union of $\calC$ and the set of non-coloured rays in the coloured fan for $X$: the $D\in\calD(X)$ which are \textit{not} $G$-invariant correspond uniquely to colours in $\calC$, and the $D\in\calD(X)$ which are $G$-invariant correspond uniquely to non-coloured rays. 
Let $\calF(X)$ denote the union of all $\calF(\sigma^c)$ as $\sigma^c$ ranges over the coloured cones in the coloured fan for $X$. Note that the group $\Aut^G(X)$ of $G$-equivariant automorphisms of $X$ is isomorphic to the torus $P/H$, which has one-parameter subgroup lattice $N$; we denote this torus by $T_N$. 

To indicate the horospherical variety corresponding to $\Sigma^c$, we sometimes write $X_{\Sigma^c}$. If $\Sigma^c$ is generated by a single coloured cone (meaning it has one maximal coloured cone) $\sigma^c$, then we may abuse notation to write $\Sigma^c=\sigma^c$, and the corresponding variety $X_{\sigma^c}$ is called \define{simple} (equivalently, this means the variety has a unique closed $G$-orbit). 

Lastly, if $X_i$ are $G/H_i$-horospherical varieties for $i=1,2$, then a $G$-equivariant morphism $X_1\to X_2$ which restricts to the projection map $G/H_1\to G/H_2$ induced by an inclusion $H_1\subseteq H_2$ is called a \define{horospherical morphism}.

\section{Preliminaries}\label{sec:preliminaries}

We briefly review some aspects of horospherical varieties that are important for our proofs of the main results.

\subsection{Affine local structure}\label{subsec:affine local structure}

If $X=X_{\Sigma^c}$ is a $G/H$-horospherical variety, then it admits an open cover by the simple $G/H$-horospherical varieties $X_{\sigma^c}$ as $\sigma^c$ ranges over the coloured cones of $\Sigma^c$. Unlike in toric geometry, simple horospherical varieties are not necessarily affine; in fact, a horospherical variety is affine if and only if it is simple, so of the form $X_{\sigma^c}$ for a single coloured cone $\sigma^c$, and $\calF(\sigma^c)$ is the entire universal colour set $\calC$. 

Now consider the case when $\Sigma^c=\sigma^c$ is a single coloured cone, so $X$ is simple. We provide a quick summary of the affine local structure for $X$; compare with \cite[Theorem 28.2]{timashev2011homogeneous} (or \cite[Section 6.2]{monahan2023overview}). 

Let $Q\supseteq B^-$ be the parabolic subgroup of $G$ corresponding to $I\cup\calF(\sigma^c)\subseteq S$.
Since $I\subseteq I\cup\calF(\sigma^c)$, we have $P\subseteq Q$, so we have a natural projection $G/H\to G/P\to G/Q$. The universal colour set $\calC(G/Q)$ for $G/Q$ is $\calC\setminus\calF(\sigma^c)$, so using facts about morphisms of horospherical varieties, this projection $G/H\to G/Q$ extends to a horospherical morphism $X\to G/Q$. Let $Z$ denote the fibre of the base point $eQ$; so $Z$ is a closed subvariety of $X$. Note that every fibre of $X\to G/Q$ is isomorphic to $Z$ since the map is $G$-equivariant and the base is a single $G$-orbit. In particular, we have $X=G\times^Q Z$; recall that the associated bundle $G\times^Q Z$ is $(G\times Z)/Q$ where $q\cdot (g,z):=(gq^{-1},q\cdot z)$. Since $eQ$ is a $Q$-fixed point in $G/Q$, we see that $Z$ is $Q$-invariant. Moreover, $\Rad_u(Q)$ acts on $Z$ trivially, so $Z$ is an $\calL$-variety where $\calL$ is the standard Levi subgroup of $Q$. Note that $\calL$ is a connected reductive algebraic group with simple root set $I\cup\calF(\sigma^c)$.  

The coloured lattice $N(\calL/(\calL\cap H))$ satisfies the following properties. As a lattice, it is the same as $N(G/H)$ because $\calL/(\calL\cap H)$ and $G/H$ have the same associated torus. However, the colour structure is different: the universal colour set is $\calC(\calL/(\calL\cap H))=\calF(\sigma^c)$. 

The punchline is that $Z$ is an affine $\calL/(\calL\cap H)$-horospherical variety whose associated coloured cone is $\sigma^c$ (the same as for $X$) viewed on the coloured lattice $N(\calL/(\calL\cap H))$. 

\subsection{Cox construction}\label{subsec:cox construction}

In this subsection, we summarize the Cox construction for horospherical varieties. The Cox ring and Cox construction for spherical varieties was first studied by Brion \cite{brion2007total-coordinate}; our summary is a simplified version of Brion's treatment because we are only interested in horospherical varieties (compare with \cite[Section 5.2]{monahan2025horospherical-stack}). The general theory of this Cox construction is discussed in great detail in \cite{arzhantsev2015cox-rings} (primarily Sections 1.4 and 1.6, and see Section 4.5.4 on spherical varieties). 

Let $X$ be a $G/H$-horospherical variety with associated coloured fan $\Sigma^c$ on $N$. For this construction, we assume that $X$ has no torus factors, meaning that $X$ is \textit{not} $G$-equivariantly isomorphic to $X_1\times T_1$ where $X_1$ is a horospherical $G$-variety of smaller dimension than $X$ and $T_1$ is a nontrivial torus; see \cite[Section 5.6]{monahan2023overview} for full details on torus factors in horospherical varieties. Although we explain how to remove this assumption in \cref{torus factor in cox construction}. 

\subsubsection*{Step 1: Cox ring} The \define{Cox ring} of $X$ is 
\begin{align*}
	\Cox(X) := \bigoplus_{[D]\in\Cl(X)} \scrO_X(D)(X).
\end{align*}
When $X$ is toric, this is the same as the homogeneous coordinate ring constructed in \cite{cox1995homogeneous}, which is a polynomial ring. Since $X$ is horospherical, $\Cox(X)$ is a finitely generated $k$-algebra, so $\Spec(\Cox(X))$ is an affine variety. 

\subsubsection*{Step 2: $\wh G$ and $\wh K$ groups} This $\Spec(\Cox(X))$ has the structure of a horospherical $\wh G$-variety where $\wh G$ is a connected reductive algebraic group which can be determined as follows. 
Recall that $G=G^{ss}\times T_0$ where $G^{ss}$ is semisimple simply connected and $T_0$ is a torus. Since $\Cl(X)$ is finitely generated, $\wh K:=\Spec(k[\Cl(X)])$ is a diagonalizable algebraic group, and the identity component $\wh K^\circ$ is a torus of rank $\rank(\Cl(X))=\#\calD(X)-\rank(N)$. Then we may take $\wh G=G^{ss}\times (T_0\times \wh K^\circ)=G\times \wh K^\circ$. Moreover, $\Spec(\Cox(X))$ is a $\wh G/\wh H$-horospherical variety for some $\wh H\subseteq\wh G$. 

More specifically, we can define $\wh H$ as follows. If $n=\#\calD(X)$, then the surjection of character lattices $\Z\calD(X)\cong \Z^n\to \Cl(X)$ (since $\calD(X)$ generates $\Cl(X)$) yields an injection of diagonalizable groups $\wh K\subseteq \G_m^n$. Using the morphism $G\to X$ via $g\mapsto g\cdot x_0$ (where $x_0$ is in the open $G$-orbit), we can pull back each $D\in\calD(X)$ to a single equation $f_D\in k[G]$ (chosen uniquely by requiring $f_D$ to be $T_0$-invariant and $f_D(1)=1$; compare with \cite[Section 4.1]{brion2007total-coordinate}), and $H$ acts on $f_D$ by a character $\chi_D$; note that if $D$ is $G$-invariant, then $f_D=1$ and $\chi_D=1$. If $\calD(X)=\{D_1,\ldots,D_n\}$, then we can define 
\begin{align*}
	\wh H := \{(h,\chi_{D_1}(h),\ldots,\chi_{D_n}(h)) : h\in H\} \subseteq \wh G \subseteq G\times\G_m^n.
\end{align*}

\subsubsection*{Step 3: $\wh N$ coloured lattice} Let $\wh N$ be the coloured lattice associated to $\wh G/\wh H$. Then $\wh N$ has rank $n:=\#\calD(X)$, which is equal to the sum of $\#\calC$ and the number of non-coloured rays in $\Sigma^c$. Furthermore, $\wh N$ has universal colour set $\calC$ (the same as $N$), and the colour points $\wh u_\alpha\in\wh N$ are part of a $\Z$-basis. Thus, we can decompose $\wh N$ into the following standard basis:
\begin{align}\label{eq:cox lattice basis}
	\wh N = \left(\bigoplus_{\alpha\in\calC} \Z \cdot e_\alpha\right) \oplus \left(\bigoplus_{(\rho,\varnothing)\in\Sigma^c(1)} \Z \cdot e_\rho\right)
\end{align}
where each $e_\alpha$ is a basis vector equal to the colour point $\wh u_{\alpha}$, and each $e_\rho$ is a basis vector corresponding to the non-coloured ray $(\rho,\varnothing)$ in $\Sigma^c$. 

Then the coloured fan associated to $\Spec(\Cox(X))$ is the single coloured cone generated by all basis vectors $(\Cone(e_\alpha,~ e_\rho : \alpha\in\calC,~ (\rho,\varnothing)\in\Sigma^c(1)),~ \calC)$, i.e. the first orthant in $\R^n$. 

\subsubsection*{Step 4: $\mu$ map} The projection map $\wh G/\wh H\to G/H$ induced by $\wh G=G\times \wh K^\circ\to G$ induces a map of coloured lattices $\mu:\wh N\to N$, which explicitly is given by
\begin{align}\label{eq:cox lattice map}
	\mu(e_\alpha)=u_\alpha ~~\forall \alpha\in\calC, \qquad \mu(e_\rho)=u_\rho ~~\forall (\rho,\varnothing)\in\Sigma^c(1).
\end{align}

Note that the $\R$-linear extension $\mu_\R:\wh N_\R\to N_\R$ is surjective because $X$ has no torus factors (see \cite[Proposition 5.39]{monahan2023overview}), so $\mu$ has finite cokernel. 

\subsubsection*{Step 5: Good quotient} The dual map of $\mu$ is part of the short exact sequence
\begin{align*}
	0 \map N^\vee \xrightarrow{\mu^\vee} \wh N^\vee\cong \bigoplus_{D\in\calD(X)} \Z D \map \Cl(X) \map 0
\end{align*}
and by applying $\Hom(-,\G_m)$, the map $\wh N^\vee\to \Cl(X)$ induces an inclusion of diagonalizable groups $\wh K\subseteq T_{\wh N}\cong \G_m^n$. Thus, $\wh K$ acts on $\Spec(\Cox(X))$ via $\wh G$-equivariant automorphisms. 

Now there exists a $\wh G$-invariant open subset $\wh X\subseteq \Spec(\Cox(X))$, which is itself a $\wh G/\wh H$-horospherical variety, which has the following two properties: $\codim(\Spec(\Cox(X))\setminus X)\geq 2$ and there is a $\wh G$-equivariant good quotient $\pi:\wh X\to X$ for the action of $\wh K$ (acting by $\wh G$-equivariant automorphisms). Note that $\pi$ is a horospherical morphism with associated map of coloured lattices $\mu:\wh N\to N$. 

The good quotient $\pi:\wh X\to X$ is geometric if and only if $\Sigma^c$ is simplicial (equivalently, $X$ is $\Q$-factorial), and furthermore $\wh K$ acts freely on $\wh X$ if and only if $\Sigma^c$ is regular (equivalently, $X$ is factorial).

\subsubsection*{Step 6: $\wh\Sigma^c$ coloured fan} Let $\wh\Sigma^c$ be the coloured fan on $\wh N$ associated to $\wh X$, which is a sub-coloured fan of $(\Cone(e_\alpha,~ e_\rho : \alpha\in\calC,~ (\rho,\varnothing)\in\Sigma^c(1)),~ \calC)$. We can describe $\wh\Sigma^c$ in terms of $\Sigma^c$ as follows. For each $\sigma^c=(\sigma,\calF)\in\Sigma^c$, let $\wh\sigma^c=(\wh\sigma,\wh\calF)$ be the coloured cone on $\wh N$ defined by
\begin{align*}
	\wh\sigma := \Cone(e_\alpha,~ e_\rho : \alpha\in\calF,~ \rho\subseteq\sigma) \quad\text{and}\quad \wh\calF:=\calF.
\end{align*}
Then $\wh\Sigma^c$ is the coloured fan on $\wh N$ which is generated by all such $\wh\sigma^c$. Note that $\wh\Sigma^c$ is regular, so $\wh X$ is factorial (but might not be smooth). 

\begin{remark}[Cox construction with torus factors]\label{torus factor in cox construction}
	If $X$ has a torus factor, then using \cite[Section 5.6]{monahan2023overview} we may write $X=X'\times T_{N/N'}$ where $N'$ is the (saturated) sublattice of $N$ spanned by all points in the fan $\Sigma$ together with all colour points in $N$, and $X'$ is the horospherical $G$-variety with coloured fan $\Sigma^c$ viewed on the coloured lattice $N'$. 
	
	Then performing the Cox construction on $X$ is essentially the same as performing it on $X'$. Explicitly, the Cox construction on $X'$ yields the good quotient $\wh{X'}\to X'$ for the action of $\wh{K'}$, and we can obtain the good quotient
	\begin{align*}
		\wh X:=\wh{X'}\times T_{N/N'} \map X'\times T_{N/N'}=X
	\end{align*}
	for the action of $\wh{K'}$, where $\wh{K'}$ acts trivially on $T_{N/N'}$. We can recover all the notation used above in the Cox construction for this situation with torus factors:
	\begin{itemize}[leftmargin=2.5em]
		\item $\wh G$ and $\wh K$ in the construction for $X$ are $\wh{G'}\times T_{N/N'}$ and $\wh{K'}\times\{1\}\cong \wh{K'}$, respectively. 
		\item $\wh N$ in the construction for $X$ is $\wh{N'}\times (N/N')$.
		\item $\wh\mu$ in the construction for $X$ is the product of $\wh{\mu'}:\wh{N'}\to N'$ with $\id:N/N'\to N/N'$.
		\item $\wh X$ in the construction for $X$ is $\wh{X'}\times T_{N/N'}$, as above.
		\item $\wh\Sigma^c$ for $\wh X$ is the same as $(\wh{\Sigma'})^c$ for $\wh{X'}$, but now considered on $\wh N$ rather than $\wh{N'}$. 
	\end{itemize}
\end{remark}

\subsection{Vividness}\label{subsec:vividness}

For convenience, we recall the definition of vivid, \cref{vivid definition}.
\colourconditiondefinition*

\begin{remark}\label{vivid Dynkin diagrams}
	If $\Gamma^\alpha$ is one of the connected components from \cref{vivid definition}, then $\Gamma^\alpha=\Gamma_{\{\alpha\}\cup I_\alpha}$ has one of the two forms below:
	\begin{equation*}
		\begin{tikzpicture}
			\def\a{1}
			\tikzset{dynkin/.style={circle,draw,inner sep=0pt,minimum size=2mm}}
			\path
			(0,0)    node{$\mathbf{A}_n$:}  
			++(0:\a)	node[dynkin] (N1) {} +(-90:.5) node{$\alpha$}
			++(0:\a)   node[dynkin] (N2) {} 
			++(0:\a) node[dynkin] (N3) {}
			++(0:0.5*\a) coordinate (A) ++(0:\a) coordinate (B)
			++(0:0.5*\a) node[dynkin] (N4) {} ;
			
			\draw[dashed] (A)--(B);
			\draw (N1)--(N2)--(N3)--(A) (B)--(N4);
			\draw[decorate,decoration={brace,raise=3mm},thick]
			(N4.center)--(N2.center) node[midway,below=4mm]{$I_\alpha$};
			
			\path
			(7*\a,0)      node{$\mathbf{C}_n$:}
			++(0:\a)	node[dynkin] (M1) {} +(-90:.5) node{$\alpha$}
			++(0:\a)   node[dynkin] (M2) {} 
			++(0:\a) node[dynkin] (M3) {}
			++(0:0.5*\a) coordinate (C) ++(0:\a) coordinate (D)
			++(0:0.5*\a) node[dynkin] (M4) {} 
			++(0:0.5*\a) coordinate (E)
			++(0:0.5*\a) node[dynkin] (M5) {} ;
			
			\draw[dashed] (C)--(D);
			\draw (M1)--(M2)--(M3)--(C) (D)--(M4);
			\draw[double,double distance=1mm] (M4)--(M5);
			\draw[-{Classical TikZ Rightarrow[length=2mm]},double,double distance=1mm] (M5)--(E.west);
			\draw[decorate,decoration={brace,raise=3mm},thick]
			(M5.center)--(M2.center) node[midway,below=4mm]{$I_\alpha$};
		\end{tikzpicture}
	\end{equation*}
	If $G^\alpha$ is the semisimple simply connected group associated to this Dynkin diagram $\Gamma^\alpha$, then $G^\alpha=\SL_{n+1}$ or $G^\alpha=\opn{Sp}_{2n}$, respectively. Having one of the two forms above is equivalent to $G^\alpha$ acting transitively on projective space (of dimension $n$ or $2n-1$, respectively) where the stabilizer (up to conjugation) is the parabolic subgroup of $G^\alpha$ corresponding to $I_\alpha$. 
\end{remark}

\begin{example}[{cf. \cite[Example 2.5]{pasquier2006thesis}}]\label{vivid examples}	
	Consider $G$ of type $\mathbf{C}_7$, so we may assume $G\cong \opn{Sp}_{14}\times T_0$ where $T_0$ is a torus.  
	\begin{equation*}
		\begin{tikzpicture}
			\def\a{1}
			\tikzset{dynkin/.style={circle,draw,inner sep=0pt,minimum size=2mm}}
			\path
			(0,0)   node[dynkin] (M1) {} +(-90:.5) node{$\alpha_1$}
			++(0:\a)   node[dynkin] (M2) {} +(-90:.5) node{$\alpha_2$}
			++(0:\a) node[dynkin] (M3) {} +(-90:.5) node{$\alpha_3$}
			++(0:\a) node[dynkin] (M4) {} +(-90:.5) node{$\alpha_4$}
			++(0:\a) node[dynkin] (M5) {} +(-90:.5) node{$\alpha_5$}
			++(0:\a) node[dynkin] (M6) {} +(-90:.5) node{$\alpha_6$}
			++(0:0.5*\a) coordinate (E)
			++(0:0.5*\a) node[dynkin] (M7) {} +(-90:.5) node{$\alpha_7$} ;
			
			\draw (M1)--(M2)--(M3)--(M4)--(M5)--(M6);
			\draw[double,double distance=1mm] (M6)--(M7);
			\draw[-{Classical TikZ Rightarrow[length=2mm]},double,double distance=1mm] (M7)--(E.west);
		\end{tikzpicture}
	\end{equation*}
	
	Let $\sigma^c=(\sigma,\calF)$ be a coloured cone on $N$ with $\calF\subseteq \calC=S\setminus I$. We give the following examples with $I$ and $\calF$:
	\begin{enumerate}[label=(\alph*)]
		\item If $I=\{\alpha_2,\alpha_6,\alpha_7\}$ and $\calF=\{\alpha_1,\alpha_5\}$, then $\sigma^c$ is vivid; note that $\Gamma^{\alpha_1}$ has type $\mathbf{A}_2$ and $\Gamma^{\alpha_5}$ has type $\mathbf{C}_3$. 
		\item If $I=\{\alpha_4,\alpha_6,\alpha_7\}$ and $\calF=\{\alpha_5\}$, then $\sigma^c$ is \textit{not} vivid because $\Gamma^{\alpha_5}$ has type $\mathbf{C}_4$ but $\alpha_5$ is not the first simple root. 
		\item If $I=\{\alpha_2,\alpha_5\}$ and $\calF=\{\alpha_1,\alpha_3\}$, then $\sigma^c$ is \textit{not} vivid because $\alpha_1$ and $\alpha_3$ are in the same connected component of $\Gamma_{I\cup\calF}$.  \qedhere
	\end{enumerate}
\end{example}

\begin{remark}[Vivid in Cox construction]\label{vivid in cox construction}
	Let $X$ be a $G/H$-horospherical variety, and let $\wh X$ be from the Cox construction of $X$. Since the coloured fan $\wh\Sigma^c$ for $\wh X$ is regular, it is clear that the following are equivalent:
	\begin{enumerate}
		\item $X$ is vivid.
		\item $\wh X$ is vivid.
		\item $\wh X$ is smooth. 
	\end{enumerate}
\end{remark}

\begin{proposition}[Vivid for affine varieties]\label{vivid for affine}
	Let $X$ be an affine $G/H$-horospherical variety with no torus factors. Then the following are equivalent:
	\begin{enumerate}
		\item $X$ is vivid.
		\item $G/P$ is a product of projective spaces.
		\item $X$ is a toric variety.\footnote{Here we mean that $X$ is intrinsically a toric variety, but we are not saying that $G$ has to be a torus. E.g. $X=\A^2$ is a horospherical $\SL_2$-variety, but it also admits the structure of a toric variety with torus $\G_m^2$.}
		\item $\wh X$ (from the Cox construction) is an affine space.
	\end{enumerate}
	
	\begin{proof}
		Since $X$ is affine, it corresponds to a single coloured cone $\sigma^c$ with $\calF(\sigma^c)=\calC=S\setminus I$. Using \cref{torus factor in cox construction}, we may assume that $X$ has no torus factors. 
		
		(1)$\Rightarrow$(2): Note that $\Gamma_{I\cup\calF(\sigma^c)}=\Gamma_S$ is the full Dynkin diagram of $G$. Therefore, the connected components of the Dynkin diagram are $\Gamma_{\{\alpha\}\cup I_\alpha}$ (using the notation of \cref{vivid definition}) as $\alpha$ ranges through the elements of $\calF(\sigma^c)=\calC$, and each connected component contributes a factor of a projective space in $G/P$ (using \cref{vivid Dynkin diagrams}).
		
		(2)$\Rightarrow$(3): This follows from \cite[Proposition 3.19]{monahan2023overview} because $X$ is affine.
		
		(3)$\Rightarrow$(4): It is well-known that $\Cox(X)$ is a polynomial ring when $X$ is a toric variety. 
		
		(4)$\Rightarrow$(1): This follows from combining \cref{vivid in cox construction} with the fact that affine space (being smooth) is vivid.
	\end{proof}
\end{proposition}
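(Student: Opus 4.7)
The plan is to establish the cycle of implications (1) $\Rightarrow$ (2) $\Rightarrow$ (3) $\Rightarrow$ (4) $\Rightarrow$ (1). Since $X$ is affine, its coloured fan consists of a single coloured cone $\sigma^c=(\sigma,\calF)$ with $\calF=\calC=S\setminus I$, so every simple root outside $I$ appears as a colour.

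For (1) $\Rightarrow$ (2), I would argue purely on the Dynkin diagram of $G$. Condition (1) of vividness forces each connected Dynkin component to contain at most one element of $\calF$. If a component $C$ contains some $\alpha\in\calF$, then $C\setminus\{\alpha\}\subseteq I$, and a short connectivity argument combined with condition (2) of vividness shows that $C\setminus\{\alpha\}$ equals the unique $I$-component $I_\alpha$ adjacent to $\alpha$, with $\{\alpha\}\cup I_\alpha$ of type $\mathbf{A}_n$ or $\mathbf{C}_n$ and $\alpha$ at the first node (any $I$-subcomponent of $C$ not adjacent to $\alpha$ would have to be reached through other $\calF$-vertices, contradicting condition (1)). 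Any Dynkin component entirely in $I$ contributes a trivial factor to $G/P$ and is ruled out by the no-torus-factor hypothesis. Classical computations for minuscule quotients then identify each $G_{\{\alpha\}\cup I_\alpha}/P_{\{\alpha\}\cup I_\alpha}$ as a projective space, so $G/P$ is a product of projective spaces.

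For (2) $\Rightarrow$ (3), I would invoke \cite[Proposition 3.19]{monahan2023overview}, which identifies affine $G/H$-horospherical varieties whose base $G/P$ is a product of projective spaces as intrinsically toric. For (3) $\Rightarrow$ (4), I would combine the classical fact from \cite{cox1995homogeneous} that the Cox ring of a toric variety is a polynomial ring with the observation that, because $X$ is affine with single coloured cone $(\sigma,\calC)$, the Cox coloured fan $\wh\Sigma^c$ consists of the full positive orthant in $\wh N$ with colour set $\calC$. Hence $\wh X=\Spec(\Cox(X))$ (nothing is removed), and this is $\mathbb{A}^n$. For (4) $\Rightarrow$ (1), affine space is smooth; by \cref{vivid in cox construction}, smoothness of $\wh X$ is equivalent to vividness of $X$, so $X$ is vivid.

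I expect the main obstacle to be the combinatorial step (1) $\Rightarrow$ (2): matching the two constraints in \cref{vivid definition} on the nose against the shape $\{\alpha\}\cup I_\alpha$ of type $\mathbf{A}_n$ or $\mathbf{C}_n$ with $\alpha$ first, and correctly using the no-torus-factor hypothesis to eliminate spurious Dynkin components lying entirely in $I$. The remaining three implications reduce either to a cited result (\cite[Proposition 3.19]{monahan2023overview}, \cite{cox1995homogeneous}) or to the preceding \cref{vivid in cox construction}.
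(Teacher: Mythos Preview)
Your proposal is correct and follows essentially the same cycle (1)$\Rightarrow$(2)$\Rightarrow$(3)$\Rightarrow$(4)$\Rightarrow$(1) as the paper, with the same citations and the same use of \cref{vivid in cox construction}. Your (1)$\Rightarrow$(2) argument is somewhat more detailed than the paper's (which simply asserts that $I$ partitions into the $I_\alpha$'s); note that your aside about Dynkin components entirely contained in $I$ is harmless but unnecessary, since such a component contributes a trivial factor to $G/P$ and so does not obstruct (2) regardless of whether it is actually excluded.
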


\begin{remark}[Geometry of vivid]\label{geometry of vivid}
	A $G/H$-horospherical variety $X$ is vivid if and only if each of the affine horospherical varieties in the affine local structure for $X$ (as in \cref{subsec:affine local structure}) is vivid. Combining with \cref{vivid for affine}, this tells us that vividness for $X$ is precisely indicating when the affine local structure of $X$ uses toric varieties. 
\end{remark}

\section{Main results}\label{sec:main results}

The goal of this section is to prove the main results from \cref{sec:introduction}, namely \cref{characterization of quotient singularities} and \cref{fulton's question for horospherical varieties}. For convenience, we recall the statement of \cref{characterization of quotient singularities}. 

\characterizationofquotientsingularities*

\begin{remark}[Quotient singularities over $k$ vs. $\C$]\label{quotient singularities over k vs C}
	Since $k$ is algebraically closed with characteristic $0$, we may base change a variety $X$ over $k$ to $X_\C$ over $\C$, and then by the Lefschetz principle, $X$ has quotient singularities if and only if $X_\C$ has quotient singularities. 
	
	Indeed, if $V/F\to X$ is an étale neighbourhood where $V$ is smooth and $F$ is a finite group, all defined over $k$, then this is all defined over the algebraic closure of a finitely generated field $k_0$ which is a subfield of $\C$. Now we may descend to $V_{k_0}/F_{k_0}\to X_{k_0}$ which is still étale, and then base change to $V_{\C}/F_{\C}\to X_\C$ which is again étale. We may also reverse this process: starting with everything defined over $\C$, descend to $k_0$, and then base change to $k$. 
\end{remark}

\begin{proof}[Proof of \cref*{characterization of quotient singularities}]
	In this proof, we may assume that $X$ has no torus factors; this will simplify our use of the Cox construction (see \cref{torus factor in cox construction}). 
	
	First, we prove the reverse direction, so suppose that $\Sigma^c$ is simplicial and vivid. Since quotient singularities are a local condition, we may assume that $\Sigma^c=\sigma^c$ is a single coloured cone, and using the affine local structure for horospherical varieties, we may assume that $X$ is affine, i.e. $\calF(\sigma^c)=\calC$; note that this simplification preserves the assumptions on $\Sigma^c$. In this case, $X$ is toric by \cref{vivid for affine}, so because it is $\Q$-factorial (since $\Sigma^c=\sigma^c$ is simplicial) it follows that $X$ has quotient singularities (see \cite[Theorem 3.1.19]{cox2011toric} and \cite[Proposition 4.2.7]{cox2011toric}). 
	
	Now we prove the forward direction by contraposition. If $\Sigma^c$ is not simplicial, then $X$ is not $\Q$-factorial, so $X$ must have worse than quotient singularities. Therefore, we may assume that $\Sigma^c$ is simplicial but not vivid. We begin by making several reductions. As in the proof of the reverse direction, we may assume that $X$ is affine, i.e. $\Sigma^c=\sigma^c$ and $\calF(\sigma^c)=\calC$. 
	
	Since $X$ is $\Q$-factorial, we can write $X=\wh X/\wh K$ (geometric quotient) where $\wh X$ and $\wh K$ come from the Cox construction for $X$. Note that $\wh X=\Spec(\Cox(X))$ since $X$ is affine. By \cite[Theorem 3.8]{gagliardi2014cox-ring}, we have $\Cox(X)=\Cox(G/P)[x_1,\ldots,x_\ell]$ where $\ell$ is the number of $G$-invariant prime divisors of $X$. Thus, $\wh X=Y\times\A^\ell$ where $Y:=\Spec(\Cox(G/P))$. 
	To show that $X$ has worse than quotient singularities it suffices to show this for $Y$. Recall from the Cox construction that $Y$ is an affine horospherical $G'$-variety (for some particular $G'$) which is factorial but not vivid, and its coloured cone $\sigma^c(Y)$ is generated by all colour points (which form a $\Z$-basis for the lattice) and $\calF(Y)=\calC$. 
	
	Now we explain how we can assume that $Y$ has a unique (therefore isolated) singularity. Consider a coloured face $\sigma_0^c=(\sigma_0,\calF_0)$ of $\sigma^c(Y)$ defined as follows: $\sigma_0$ is generated by all colour points of $\calF_0$, where $\calF_0$ is chosen minimally among subsets of $\calF(Y)=\calC$ such that $\sigma_0^c$ is not vivid (which exists because $Y$ is not vivid). Then the horospherical $G'$-variety $Y_0$ corresponding to $\sigma_0^c$ is a $G'$-invariant open subvariety of $Y$, so if we can show that $Y_0$ has worse than quotient singularities, then the same will be true for $Y$. 
	Again using the affine local structure for horospherical varieties, the singularities of $Y_0$ are the same as those of $Z_0$ where $Z_0$ is the corresponding affine horospherical variety (as in \cref{subsec:affine local structure}). 
	Note that, after removing all torus factors from $Z_0$, this $Z_0$ is the spectrum of the Cox ring of a flag variety, just like $Y$.
	By minimality of $\calF(Y_0)=\calF(Z_0)$, the fixed point in $Z_0$ is singular, but every other point is contained in a horospherical open subvariety which is vivid and thus smooth (since it is factorial). Therefore, in total we may assume that $Y$ (after replacing it with $Z_0$) has an isolated singularity $p\in Y$.  
	
	Let $G'/H'$ be the open $G$-orbit in $Y$ for an appropriate horospherical subgroup $H'$ (coming from the Cox construction); in fact, $G'/H'\cong G/[P,P]$. Since $Y$ has no torus factors, $H'$ contains the radical $\Rad(G')$ of $G'$, and therefore $G'/H'=(G'/\Rad(G'))/(H'/\Rad(G'))$. This tells us that we may assume $G'=(G')^{ss}$ is semisimple simply connected, and thus the same is true of $G$. Note that, because $Y$ has no $G'$-invariant divisors (equivalently, it has no non-coloured rays in its coloured fan), the codimension of $Y\setminus (G'/H')$ is at least $2$ in $Y$. 
	
	Using \cref{quotient singularities over k vs C}, we may assume that $k=\C$. If $p$ was a quotient singularity, then we could find an analytic neighbourhood (since $k=\C$, we can use analytic neighbourhoods instead of étale neighbourhoods) $W\subseteq Y$ of $p$ where $W=V/F$ with $V$ smooth and $F$ a finite group. Since $k=\C$, we may assume $V=\A^n$ and $F$ acts linearly. Then $F$ acts freely away from the origin in $V$ because $p\in W$ is an isolated singularity. Now $(V\setminus \{0\})\to (W\setminus\{p\})$ is a quotient map where $F$ acts freely, so it is a topological covering. Thus, if we can show that $\pi_1(W\setminus\{p\})$ is trivial, then $F$ must be trivial, which is a contradiction. 
	
	Notice that $W\setminus\{p\}$ is homotopy equivalent to $Y\setminus\{p\}$ (both $W$ and $Y$ are contractible since the former is affine space modulo a finite group, and the latter is the affine multicone over the flag variety $G/P$), so it suffices to show that $\pi_1(Y\setminus\{p\})$ is trivial. Since $Y\setminus\{p\}$ is smooth and $(Y\setminus\{p\})\setminus (G'/H')$ is a closed subvariety of codimension at least $2$ in $Y\setminus\{p\}$, it follows that $\pi_1(Y\setminus\{p\})=\pi_1(G'/H')=\pi_1(G/[P,P])$. Finally, $G$ is simply connected and $[P,P]$ is connected, so $\pi_1(G/[P,P])$ is trivial.
\end{proof}

\begin{example}\label{example Q-factorial not quotient singularity}
	The following is an explicit example of a horospherical variety which is $\Q$-factorial but has worse than quotient singularities, which could not happen in the world of toric varieties. Let 
	\[
	X:=\{\vec u\cdot \vec x=0\}=\{ux+vy+wz=0\}\subseteq\C^3_{\vec u}\times\C^3_{\vec x}=\C^6
	\]
	(where $\vec u=(u,v,w)$ and $\vec x=(x,y,z)$), which is an affine horospherical $\SL_3$-variety via the action $A\cdot (\vec u,\vec x):=(A\vec u,(A^{-1})^T\vec x)$. As in \cite[Example 5.10]{monahan2023overview}, the coloured fan for $X$ is regular but not vivid, so $X$ is factorial (hence $\Q$-factorial) but the singularity $0\in X$ is not a quotient singularity.
\end{example}

Using \cref{characterization of quotient singularities}, we now prove \cref{fulton's question for horospherical varieties} as stated below. 

\begin{corollary}\label{fulton's question for horospherical as corollary}
	\cref{fulton's question for horospherical varieties} holds. That is, if $X$ is a quasiprojective horospherical variety with quotient singularities, then $X=V/F$ for some smooth variety $V$ and some finite diagonalizable group $F$. 
	
	\begin{proof}
		We may assume that $X$ has no torus factors. By \cref{characterization of quotient singularities}, $X$ is vivid, so the Cox construction yields $X=\wh X/\wh K$ where $\wh X$ is smooth (using \cref{vivid in cox construction}) and $\wh K$ is diagonalizable acting on $\wh X$ with finite stabilizers (since $X$ is $\Q$-factorial; see \cite[Corollary 1.3.4.8]{arzhantsev2015cox-rings}). Therefore, we may apply \cite[Theorem 2.10]{satriano2015quotients} to obtain the desired result. 
	\end{proof}
\end{corollary}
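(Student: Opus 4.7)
The plan is to combine \cref{characterization of quotient singularities} with the Cox construction of \cref{subsec:cox construction} to realize $X$ globally as a quotient $\wh X/\wh K$ with $\wh X$ smooth and $\wh K$ diagonalizable acting with finite stabilizers, and then to invoke \cite[Theorem 2.10]{satriano2015quotients}, which is precisely the statement that such a presentation for a quasiprojective variety produces an affirmative answer to Fulton's question.

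First I would reduce to the case where $X$ has no torus factors, so that the Cox construction applies as described in \cref{subsec:cox construction} without having to unwind the bookkeeping of \cref{torus factor in cox construction}. Then by \cref{characterization of quotient singularities}, $\Sigma^c$ is both simplicial and vivid.

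Next, I would run the Cox construction on $X$. This produces a diagonalizable group $\wh K$, a horospherical variety $\wh X$ carrying a $\wh K$-action, and a good quotient $\pi:\wh X\to X$. Vividness together with \cref{vivid in cox construction} forces $\wh X$ to be smooth, while simpliciality of $\Sigma^c$ (equivalently $\Q$-factoriality of $X$) is exactly what ensures that $\pi$ is a geometric quotient, i.e.\ that $\wh K$ acts with finite stabilizers.

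Once these pieces are assembled, the hypotheses of \cite[Theorem 2.10]{satriano2015quotients} are satisfied (quasiprojectivity of $X$, smoothness of $\wh X$, diagonalizability of $\wh K$, finiteness of stabilizers), and the conclusion follows. There is no substantive obstacle at this stage: all the real difficulty has been absorbed into \cref{characterization of quotient singularities}, after which the corollary reduces to matching hypotheses with the Satriano--Usher theorem.
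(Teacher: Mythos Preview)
Your proposal is correct and follows essentially the same route as the paper's proof: reduce to the case of no torus factors, invoke \cref{characterization of quotient singularities} to get simplicial and vivid, use \cref{vivid in cox construction} to see that $\wh X$ is smooth, use $\Q$-factoriality to get finite stabilizers for the diagonalizable $\wh K$, and then apply \cite[Theorem 2.10]{satriano2015quotients}. The only minor imprecision is your phrase ``geometric quotient, i.e.\ that $\wh K$ acts with finite stabilizers'': being a geometric quotient and having finite stabilizers are not synonymous in general, though in the Cox construction both are consequences of $\Sigma^c$ being simplicial (the paper cites \cite[Corollary 1.3.4.8]{arzhantsev2015cox-rings} for the latter).
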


\begin{remark}\label{fulton's global quotient for affine is horospherical}
	If $X$ is an affine horospherical variety with quotient singularities, then we can do \cref{fulton's question for horospherical varieties} in a ``horospherical way", meaning $V$ is a horospherical variety and the quotient map $V\to X$ is a horospherical morphism; compare this with \cref{fulton's global quotient need not be horospherical}. Indeed, in this case $X=\wh X/\wh K$ where $\wh X$ is smooth and $\wh K$ is finite. 
\end{remark}

\section*{Acknowledgements}

I am very thankful to Matthew Satriano for first introducing me to the question by Fulton and for having several helpful discussions during my time working on this paper. I would also like to thank Robert Harris, Brett Nasserden, Christian Liedtke, Matthias Pfeifer, and Stephen Melinyshyn for some helpful conversations, and the Mathematics Stack Exchange user \textit{Sasha} (see the post \href{https://math.stackexchange.com/questions/4782639/is-0-in-uxvywz-0-subseteq-mathbb-c6-a-quotient-singularity}{\textit{Is $0\in\{ux+vy+wz=0\}\subseteq\C^6$ a quotient singularity?}} from October 7, 2023) for inspiring a key step in the proof of \cref{characterization of quotient singularities}. Lastly, I am very grateful to the two anonymous referees for their detailed feedback.

\printbibliography 
	
\end{document}